\documentclass[12pt]{article} 
\usepackage{amsmath}
\usepackage{amssymb}
\usepackage{amsthm}
\usepackage{mathtools}

\usepackage[utf8]{inputenc}
\usepackage[english]{babel}
\usepackage{color}

\usepackage{wrapfig}

\newtheorem{theorem}{Theorem}[section]
\newtheorem{corollary}[theorem]{Corollary}

\newtheorem{lemma}[theorem]{Lemma}

\newtheorem{proposition}[theorem]{Proposition}

\title{An incomplete attack on the upper bound of the unit distance problem}
\date{\today}
\author{Steven Senger}

\begin{document}
\maketitle
\begin{abstract}
This is an incomplete attempt to show that the upper bound of $\lesssim n^\frac{4}{3}$ on the number unit distances determined by a large finite set of $n$ points in the plane is not sharp. The methods also say something about sets of $n$ points and $n$ lines that attain the sharp bound of the Szemer\'edi-Trotter point-line incidence bound.
\end{abstract}

\section{Introduction}
The unit distance problem asks for upper bounds on how often the most popular distance can occur in a large finite set of $n$ points in the plane. In 1946, in \cite{Erd46}, Erd\H os conjectured that this could occur no more than $n^{1+\epsilon}$ times for any $\epsilon > 0.$ This was recently disproved by artificial intelligence (see \cite{AI} and \cite{AlonEtAl}), causing an upsurge in interest in the problem. Following this proof, Sawin gave an explicit improvement exhibiting $Cn^{1.014114}$ unit distances for some absolute constant $C>0,$ in\cite{Sawin}. There is still a gap between the best known upper bound of $n^\frac{4}{3}$ due to Spencer, Szemer\'edi, and Trotter in \cite{SST84}, and the recent results.

Because of the recent interest, I have decided to make this document publicly available. This work was mostly done between 2015--2021, but I got stuck. The basic idea is to suppose that $n^\frac{4}{3}$ is tight, then derive a contradiction by following the crossing number lemma backwards. This type of approach worked in the case of distinct dot products, and lead to a concrete improvement for that problem (see \cite{HRS}), so I was hopeful it could work here as well. However, when I saw that in \cite{KS}, Katz and Silier produced results that appear to be significantly stronger that what I proved, I essentially abandoned the attack. Still, here is a description of the approach in case anyone finds it useful.

\subsection{Preliminaries}
By way of contradiction, we suppose that $n^\frac{4}{3}$ is a sharp upper bound for the unit distance problem. We derive necessary facts about any point set attaining this bound. In what follows, we use the notation $X \lesssim Y$ to denote that $X=O(Y),$ $X\approx Y$ to denote that $X=\Theta(Y).$ We also use the fairly standard notation $X= o(Y)$ or $Y=\omega(X).$ Moreover, we use $c$ to denote an undetermined constant, but if it has a subscript, $c_j$, we will keep track of it. Also, we assume that $n$ is a large finite number.

We begin with the celebrated result of Spencer, Szemer\'edi, and Trotter, from \cite{SST84}. Let $u(n)$ denote the maximum number of pairs of points separated by a unit distance in any set of $n$ points in the plane.
\begin{theorem}\label{SST}
$$u(n) \lesssim n^\frac{4}{3}.$$
\end{theorem}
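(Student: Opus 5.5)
We prove the bound with the crossing number lemma, which fits the paper's stated plan of later running that argument backwards. Let $P$ be a set of $n$ points and let $\mathcal{C}$ be the family of $n$ unit circles centred at the points of $P$. A unit distance pair $\{p,q\}$ with $p,q\in P$ is exactly an incidence between $q$ and the circle centred at $p$. So $u(n)$ is at most the number of incidences $I$ between $P$ and $\mathcal{C}$, up to a factor of $2$, and it suffices to show $I\lesssim n^{4/3}$.

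First we discard circles containing at most two points of $P$. They contribute at most $2n$ incidences, which is harmless. We are left with circles $C$, each containing $k_C\ge 3$ points of $P$.

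Next we build a multigraph drawn in the plane. Its vertices are the points of $P$. For each remaining circle $C$, we join consecutive points of $P\cap C$ by the arcs of $C$ between them. This gives $k_C$ edges on $C$, so the total edge count is $E=\sum_C k_C \ge I-2n$.

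The one technical point, and the step I expect to need care, is multiplicity. Two distinct unit circles meet in at most two points, so any pair of vertices lies on at most two circles. Hence each pair of vertices is joined by at most two edges. Keeping one edge from each such pair (or splitting the edges into two simple graphs) costs at most a factor of $2$. We obtain a simple topological graph with $E'\ge E/2$ edges.

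We then bound its crossings from above. Each crossing of two edges lies at an intersection point of two distinct circles. Any two circles meet in at most two points, so
\[
\mathrm{cr}\le 2\binom{n}{2}\le n^2 .
\]

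The crossing number lemma says that a simple graph with $n$ vertices and $E'\ge 4n$ edges has
\[
\mathrm{cr}\ge c\,\frac{E'^3}{n^2}.
\]
If $E'<4n$, then $I\lesssim n$ and we are done. Otherwise we combine the two bounds:
\[
c\,\frac{E'^3}{n^2}\le n^2 ,
\]
which gives $E'\lesssim n^{4/3}$. Therefore
\[
I\le 2E'+2n\lesssim n^{4/3},
\]
and so $u(n)\lesssim n^{4/3}$.

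The drawing needs two minor checks. Edges are arcs of circles that contain no vertex in their interior, so they pass through no vertex other than their endpoints. Crossings between arcs of the same circle do not occur, since consecutive arcs of one circle meet only at their shared endpoint.
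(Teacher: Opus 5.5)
Your proof is correct and is essentially the paper's argument: Sz\'ekely's crossing-number proof, with the points as vertices, the arcs between consecutive points on the unit circles as edges, and at most two crossings per pair of circles giving $\mathrm{cr}\le n(n-1)$. The only difference is that you handle the multigraph issue explicitly, which the paper glosses over even though the crossing number lemma requires a simple graph: you discard circles with at most two points and halve the edge set, using that at most two unit circles pass through any two points.
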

We sketch a of the proof of Theorem \ref{SST} due to Sz\' ekely, from \cite{Szek}. It relies on the following lemma about the crossing number of a graph, which is the smallest number of times edges can cross under any redrawing of the graph in the plane. We we record a statement given in \cite{Szek}, though various statements and proofs exist elsewhere in the literature.
\begin{lemma}\label{crossingNumber}
Given a graph $G(V,E)$ with $|E|>4|V|,$ we have that the crossing number of $G$ is bounded below by
$$cr(G)\geq \frac{|E|^3}{100|V|^2}.$$
\end{lemma}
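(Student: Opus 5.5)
The plan is the standard two-stage argument: first a linear lower bound on the crossing number coming from Euler's formula, then an amplification by random sampling of vertices.

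\emph{Step 1 (linear bound).} For any simple graph $H$ drawn in the plane, Euler's formula gives that a planar simple graph on $v\ge 3$ vertices has at most $3v-6$ edges. Given an optimal drawing of $H$ with $cr(H)$ crossings, we delete one edge per crossing. The result is planar, so $|E(H)| - cr(H) \le 3|V(H)|$, that is,
$$cr(H)\ge |E(H)|-3|V(H)|.$$
This inequality holds for every simple graph, including tiny ones, since the right-hand side is then negative or the bound is trivial.

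\emph{Step 2 (probabilistic amplification).} Fix an optimal drawing of $G$ and a parameter $p\in(0,1]$. Keep each vertex independently with probability $p$, and let $H$ be the induced subgraph with the inherited drawing. Then
$$\mathbb{E}|V(H)|=p|V|,\qquad \mathbb{E}|E(H)|=p^2|E|.$$
Each crossing involves two edges with four distinct endpoints; in an optimal drawing, edges sharing an endpoint can be assumed not to cross, by the standard uncrossing swap. So each crossing survives with probability $p^4$, and the expected number of surviving crossings is $p^4 cr(G)$. This dominates $\mathbb{E}\, cr(H)$. Taking expectations in Step 1 gives
$$p^4 cr(G)\ge p^2|E|-3p|V|,$$
hence
$$cr(G)\ge \frac{|E|}{p^2}-\frac{3|V|}{p^3}.$$

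\emph{Step 3 (choice of $p$).} Set $p=4|V|/|E|$, which is less than $1$ because $|E|>4|V|$. Substituting gives
$$cr(G)\ge \frac{|E|^3}{16|V|^2}-\frac{3|E|^3}{64|V|^2}=\frac{|E|^3}{64|V|^2}\ge\frac{|E|^3}{100|V|^2}.$$

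The only delicate point is the claim that in an optimal drawing crossings involve four distinct vertices. Two edges sharing an endpoint that cross can be redrawn by swapping their initial segments, which strictly reduces the number of crossings. Alternatively, one can sidestep this by counting crossings in a fixed drawing where adjacent edges never cross, which still upper bounds $cr(H)$. Everything else is routine.
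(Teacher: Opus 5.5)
Your proof is correct. It is the standard probabilistic amplification of the Euler bound $cr(H)\ge |E(H)|-3|V(H)|$. The uncrossing swap does strictly lower the crossing count, so in an optimal drawing every crossing has four distinct endpoints and survives with probability $p^4$. With $p=4|V|/|E|$ you in fact get the better constant $\frac{1}{64}$.

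The paper has no proof of its own to compare against. It takes the statement from Sz\'ekely and defers to the literature. The proof environment placed right after the lemma is actually the derivation of Theorem~\ref{SST} from the lemma (unit circles, arcs as edges, at most $n(n-1)$ crossings). So the comparison is a citation versus a self-contained argument.

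Your version also makes explicit a hypothesis the paper's statement leaves out: $G$ must be simple. Two vertices joined by many non-crossing parallel edges violate the bound. This matters for the paper's own application, because the unit-circle arc graph can have loops and parallel edges. These come from circles carrying only one or two points, or from two points that are consecutive on both unit circles through them. Sz\'ekely handles this by first discarding circles with at most two points and then using a bounded-multiplicity form of the lemma.

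One small quibble: your closing ``alternative'' does not actually avoid the swap. A lower bound on the crossings of a drawing with no adjacent crossings is a lower bound on $cr(G)$ only if that drawing is also crossing-minimal. The existence of such a drawing is exactly what the swap argument gives you.
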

\begin{proof}
To prove Theorem \ref{SST}, we construct a graph out of any set of $n$ points in the plane. Let the points be the vertices of the graph, so $|V|=n.$ Draw unit circles centered at each point. The arcs of the circles connecting consecutive points will be edges. Notice that by construction, $|E|$ is exactly two times the number of unit distances. So either $|E|\leq 4|V|,$ which implies that the number of unit distances is no more than $4n,$ or we can employ Lemma \ref{crossingNumber}. In the latter case, notice that since the edges are arcs of circles, we can have no more than two crossings from any pair of circles. There are exactly $n(n-1)/2$ pairs of circles, so there can be no more than $n(n-1)$ crossings. We plug in the various quantities and get
\begin{equation}\label{crossingBound}
\frac{|E|^3}{100|V|^2}\leq cr(G)\leq n(n-1),
\end{equation}
which gives us that $|E|\leq (100n^3(n-1))^\frac{1}{3}\lesssim n^\frac{4}{3}.$
\end{proof}

\subsection{Average crossings per edge}
Suppose that there exists a set $P$, of $n$ points in the plane with $c_1n^\frac{4}{3}$ pairs of points separated by a unit distance. We now claim that in the associated graph for such a set, almost every edge must have about $n^\frac{2}{3}$ crossings.
\begin{proposition}\label{avgCrossings}
In the graph associated to the point set, there are no more than $o\left(n^\frac{4}{3}\right)$ edges with $o\left( n^\frac{2}{3}\right)$ crossings or with $\omega\left(n^\frac{2}{3}\right)$ crossings.
\end{proposition}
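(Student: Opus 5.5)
The plan is to make both halves quantitative and then let the parameters tend to zero or infinity, working throughout with the specific drawing from the proof of Theorem \ref{SST}: vertices are the points of $P$, and edges are the circular arcs between consecutive points on the unit circles. The hypothesis gives $|E| = 2c_1 n^{4/3}$. Two distinct unit circles meet in at most two points, so this drawing has at most $n(n-1)\le n^2$ crossings. Each crossing involves exactly two edges, so
$$\sum_{e\in E} \mathrm{cr}(e) \le 2n^2,$$
where $\mathrm{cr}(e)$ is the number of crossings on $e$ in this drawing. The average is therefore $\lesssim n^{2/3}$, and the statement says the distribution of $\mathrm{cr}(e)$ concentrates at this scale. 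I will state the proposition in the form: for every $\varepsilon>0$ there is $\delta(\varepsilon)\to 0$ as $\varepsilon\to 0$, and for every $K$ there is $\eta(K)\to 0$ as $K\to\infty$, such that
$$\#\{e : \mathrm{cr}(e)\le \varepsilon n^{2/3}\}\le \delta n^{4/3} \quad\text{and}\quad \#\{e:\mathrm{cr}(e)\ge K n^{2/3}\}\le \eta n^{4/3}.$$
This is what the $o/\omega$ language means for a sequence of extremal configurations.

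\emph{High-crossing edges.} This is Markov's inequality applied to the sum above:
$$\#\{e:\mathrm{cr}(e)\ge Kn^{2/3}\}\le \frac{2n^2}{Kn^{2/3}} = \frac{2}{K}\, n^{4/3},$$
so $\eta(K)=2/K$. This part does not use the extremality assumption.

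\emph{Low-crossing edges.} Here extremality enters through Lemma \ref{crossingNumber}, applied to a subgraph. Let $E'=\{e:\mathrm{cr}(e)\le\varepsilon n^{2/3}\}$ and suppose $|E'|= \delta n^{4/3}$. Consider $G'=(P,E')$, drawn with the inherited arcs. Every crossing in this drawing of $G'$ is a crossing between two edges of $E'$, and each edge of $E'$ carries at most $\varepsilon n^{2/3}$ crossings in the full drawing. Hence
$$\mathrm{cr}(G')\le \tfrac12 |E'|\,\varepsilon n^{2/3}.$$
For $n$ large we have $|E'|>4n$, so Lemma \ref{crossingNumber} gives
$$\mathrm{cr}(G')\ge \frac{|E'|^3}{100 n^2} = |E'|\cdot\frac{\delta^2 n^{8/3}}{100 n^2} = |E'|\,\frac{\delta^2}{100}\,n^{2/3}.$$
Comparing the two bounds yields $\delta^2\le 50\varepsilon$, i.e.\ $\delta\le\sqrt{50\varepsilon}\to 0$, which is the claim. (If $|E'|\le 4n$, the bound is already $o(n^{4/3})$.) Since both exceptional classes are $o(n^{4/3})$ while $|E|\approx n^{4/3}$, almost every edge has $\approx n^{2/3}$ crossings.

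The main obstacle is the low-crossing case. The Markov argument only controls the upper tail, so ruling out a large family of nearly uncrossed edges needs the extremal structure. The key point is that the crossing lemma holds for every subgraph, and crossings among edges of $E'$ are bounded by the per-edge counts in the original drawing. A second, more technical point is the degenerate geometry: arcs meeting at a vertex, tangencies, or three circles through one point. I would handle these by counting only transversal crossings at interior points of arcs and noting that the crossing number is at most the crossing count of any drawing. If needed, a small perturbation turns the drawing into a proper one without increasing the number of crossings by more than a constant factor.
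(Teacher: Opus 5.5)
Your argument is correct and is essentially the paper's proof. The upper tail comes from comparing the total per-edge crossing count with the $n(n-1)$ bound for the circle drawing (your Markov step). The lower tail comes from applying Lemma \ref{crossingNumber} to the subgraph spanned by the low-crossing edges, whose inherited drawing has at most $\frac12|E'|\varepsilon n^{2/3}$ crossings; your explicit bounds $2n^{4/3}/K$ and $\sqrt{50\varepsilon}\,n^{4/3}$ just make the paper's $o/\omega$ bookkeeping quantitative. As in the paper, the loops and double edges that the arc construction can produce should be removed before invoking the lemma; multiplicity is at most $2$ once circles with at most two points are discarded, so this only affects constants.
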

\begin{proof}
Both claims will follow by way of contradiction. For the first, suppose that there is a subset $E' \subset E$ of $cn^\frac{4}{3}$ edges with $m = o\left(n^\frac{2}{3}\right)$ crossings. Then we could construct a subgraph of $G$ called $G',$ with the same vertex set, but this smaller edge set, $E'.$ We count the total crossings of $G'$ by simply adding up the number of crossings per edge, giving us
$$cr(G') = o\left(|E'| n^\frac{2}{3}\right) =o \left(n^2\right).$$
Notice that we still have $|E'|>4|V|,$ so Lemma \ref{crossingNumber} tells us that
$$cr(G')\gtrsim \frac{|E'|^3}{|V|^2}\approx n^2,$$
which contradicts the upper bound on $cr(G').$
The second claim follows by a similar argument. Suppose there is a subset, $E'' \subset E$ of $cn^\frac{4}{3}$ edges with $m =\omega\left(n^\frac{2}{3}\right)$ crossings. We again count the total number of crossings and get that
$$cr(G)=\omega\left(|E''|n^\frac{2}{3}\right)=\omega\left(n^2\right),$$
but this violates the upper bound of $n(n-1)$ as given in \eqref{crossingBound}.
\end{proof}
\begin{proposition}\label{ppc}
Any set of $n$ points in the plane with $c_1 n^\frac{4}{3}$ unit distances must consist of at least $c_2 n$ points each at a unit distance to at least $c_3 n^\frac{1}{3}$ other points in the set.
\end{proposition}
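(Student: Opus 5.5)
We argue by a dyadic/averaging (pigeonhole) count on the degrees in the unit distance graph, using only Theorem \ref{SST} (applied to subsets) and the elementary degree bound $\deg(p)\le n-1$. For each $p\in P$ let $d(p)$ be the number of points of $P$ at unit distance from $p$, so that
$$\sum_{p\in P} d(p) = 2c_1 n^\frac{4}{3}.$$
Fix a small constant $c_3>0$, say $c_3 = c_1/2$, and split $P$ into the \emph{low} points $L=\{p: d(p)< c_3 n^\frac{1}{3}\}$ and the \emph{high} points $H=P\setminus L$. The contribution of $L$ is trivially at most $|L|\,c_3 n^\frac{1}{3}\le c_1 n^\frac{4}{3}$. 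Hence the high points carry at least $c_1 n^\frac{4}{3}$ of the degree sum, and in fact at least $\tfrac{1}{2}c_1 n^\frac{4}{3}$ unit pairs have at least one endpoint in $H$.

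The goal is now to show that $|H|\ge c_2 n$. Suppose instead that $|H| = k$ with $k$ small compared to $n$. The unit pairs meeting $H$ split into those inside $H$ and those between $H$ and $L$. By Theorem \ref{SST} applied to $H$, the pairs inside $H$ number at most $C k^\frac{4}{3}$, which is $o(n^\frac{4}{3})$ if $k=o(n)$. The pairs between $H$ and $L$ require an incidence bound for a bipartite configuration: $k$ unit circles centered at points of $H$ against the $\le n$ points of $L$. This bound follows from the same Sz\'ekely argument as in the proof of Theorem \ref{SST}, via Lemma \ref{crossingNumber}, and gives
$$\lesssim k^\frac{2}{3}n^\frac{2}{3} + k + n.$$
This bipartite bound is the one ingredient not literally stated earlier, and I would prove it by drawing only the $k$ circles and placing all $n+k$ points as vertices. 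There are then at most $k^2$ crossings, so $|E|^3/(n+k)^2\lesssim k^2$ unless $|E|\lesssim n+k$.

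Putting the pieces together, the unit pairs meeting $H$ number at most
$$C\left(k^\frac{4}{3} + k^\frac{2}{3}n^\frac{2}{3} + n\right).$$
Taking $k=c_2 n$ with $c_2$ small relative to $c_1$ and the absolute constant $C$, this quantity is below $\tfrac12 c_1 n^\frac{4}{3}$, a contradiction. Therefore $|H|\ge c_2 n$, with $c_2\approx (c_1/C)^{3/2}$ and $c_3=c_1/2$.

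The main obstacle is precisely the bipartite point--circle bound, since a trivial bound fails. The estimate $\sum_{H} d(p)\le k(n-1)$ would only give $k\gtrsim n^\frac{1}{3}$. Some Szemer\'edi--Trotter-type input that is sensitive to the number of circles is therefore genuinely needed. I expect the Sz\'ekely crossing argument to adapt with only bookkeeping changes. The one point to handle carefully is the multigraph issue: two points may be joined by arcs of two different circles, and this can be dealt with by discarding circles containing at most $2$ points, which costs $O(k)$ incidences.
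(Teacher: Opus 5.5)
Your argument is correct, but it follows a different route from the paper's.

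\textbf{The paper's route.} The paper works entirely with crossings in the Sz\'ekely drawing. It first argues that $\gtrsim n$ circles each cross $\gtrsim n$ other circles. If not, the crossing upper bound $n(n-1)$ could be lowered enough to contradict Lemma~\ref{crossingNumber}. It then uses the average-crossings proposition (all but $o(n^{\frac43})$ edges carry $\approx n^{\frac23}$ crossings). From this it concludes that the $\approx n$ crossings on such a circle split into $\approx n^{\frac13}$ arcs, so $\approx n^{\frac13}$ points lie on that circle.

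\textbf{Your route.} You never look at crossings per edge. You split points by degree and observe that low-degree points carry at most $c_3 n^{\frac43}$ of the degree sum. You then rule out a small high-degree set with the unbalanced point--circle bound $I\lesssim k^{\frac23}n^{\frac23}+k+n$ for $k$ unit circles and $n$ points. That bound is itself a direct instance of Sz\'ekely's argument.

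\textbf{What each buys.} Your route is fully rigorous and gives explicit constants: $c_3=c_1/2$ and $c_2\approx (c_1/C)^{3/2}$. The paper's final step needs something stronger than it proves. It needs the arcs on the heavily-crossed circles to have only $\lesssim n^{\frac23}$ crossings on average. The average-crossings proposition concerns all but $o(n^{\frac43})$ edges globally, so it does not give this circle-by-circle without further pigeonholing. In return, the paper's route stays inside the crossing framework. As a by-product it suggests the finer picture of $\approx n^{\frac13}$ arcs per circle, each with $\approx n^{\frac23}$ crossings, which is what Proposition~\ref{typical} is after. Your degree statement is nonetheless all the proposition asserts. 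It is also all that Proposition~\ref{typical} actually cites; that proof re-derives the crossing structure itself.

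\textbf{Two small remarks.}
\begin{itemize}
\item You can drop the separate appeal to Theorem~\ref{SST}. Apply the bipartite bound to the $k$ circles centered in $H$ against all of $P$ as the point set. That already counts the pairs inside $H$, so it bounds every unit pair meeting $H$ directly.
\item Your multigraph fix addresses the wrong issue. Discarding circles with at most $2$ points removes loops and the doubled arcs on a single circle. It does nothing about two points joined by arcs of two \emph{different} circles. For that, note that at most two unit circles pass through any two given points. So after the discarding step the edge multiplicity is at most $2$, and keeping one edge from each parallel pair costs only a factor of $2$.
\end{itemize}
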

\begin{proof}
We begin by claiming that there are $\gtrsim n$ circles that each cross $\gtrsim n$ other circles. If this were not the case, we would need to have $o(n)$ circles with $\gtrsim n$ crossings. But then by referring to \eqref{crossingBound}, we could reduce the upper bound on the crossing number, and get that there are fewer than $c_1 n^\frac{4}{3}$ unit distances. Proposition \ref{avgCrossings} has guaranteed that $\gtrsim n^\frac{4}{3}$ of the edges in our graph have $\approx n^\frac{2}{3}$ crossings. So when we count the $\approx n$ total crossings for any of these $\approx n$ circles, they must break down into $\approx n^\frac{1}{3}$ edges with $\approx n^\frac{2}{3}$ crossings each.
\end{proof}
We denote the unit circle centered at the point $p$ by $C(p).$ When two points $q$ and $r$ are less than a unit distance apart, we define $L(q,r)$ to be the union of the two crescent-shaped regions bounded by the $C(q)$ and $C(r).$ We call these regions {\it lunes}. We use the notation $|L(q,r)\cap P|$ to denote the number of points from our set within a given lune. We say that a point $p$ from our set is {\it typical} if there are at least $c_4 n^\frac{1}{3}$ pairs of points $q$ and $r$ that are consecutive on $C(p)$, satisfying
$$c_5 n^\frac{2}{3}\leq |L(q,r)\cap P| \leq c_5'n^\frac{2}{3}.$$
As a consequence of this definition, a point $p$ is typical if and only if there are $\gtrsim n^\frac{1}{3}$ circles passing through $p$ that define lunes with $\approx n^\frac{2}{3}$ points from $P$ properly contained in each of them. We say that a lune {\it emanates} from a point $p$ if the two circles defining the lune intersect at $p$.
\begin{center}
\includegraphics[scale=1]{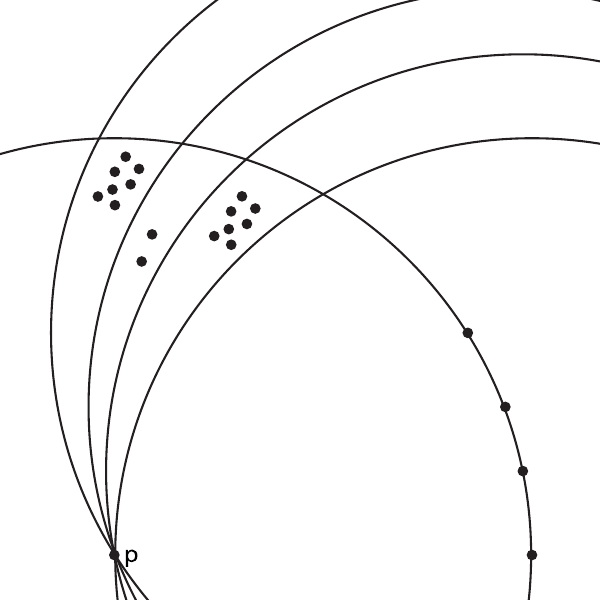}\\
\small{{\bf Figure 1:} There are three lunes pictured, each emanating from $p$. Two of these lunes have many points, while the middle lune has fewer points.}
\end{center}

\begin{proposition}\label{typical}
There are at least $c_6 n$ typical points in $P.$
\end{proposition}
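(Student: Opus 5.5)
The plan is to connect the number of crossings along an arc (edge) on $C(p)$ between consecutive points $q,r$ to the number of points of $P$ in the lune $L(q,r)$. Then we push the counting from Propositions \ref{avgCrossings} and \ref{ppc} through this dictionary.

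\textbf{Step 1: crossings on an arc versus points in its lune.} Let $q,r$ be consecutive points on $C(p)$, so that the arc $\gamma$ from $q$ to $r$ is an edge. Another unit circle $C(s)$ meets $\gamma$ in its interior only if $s$ lies at distance exactly $1$ from some point of $\gamma$. The set of centers of unit circles that meet $\gamma$ is the region swept out by unit circles centered on $\gamma$. Away from the lines through $q$ and $r$ (and up to a bounded-multiplicity correction, since a circle can cross $\gamma$ at most twice), this region is exactly the union of the two crescents between $C(q)$ and $C(r)$, that is, $L(q,r)$. To see this, note that as the center moves along $\gamma$ from $q$ to $r$, the unit circle sweeps monotonically from $C(q)$ to $C(r)$. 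I would verify this with an elementary argument: the distance from $s$ to a point of $\gamma$ is continuous along $\gamma$, and by the intermediate value theorem it takes the value $1$ exactly when $|sq|-1$ and $|sr|-1$ have opposite signs (for short arcs). The conclusion is that the number of crossings on the edge $\gamma$ is, up to a factor of $2$, $|L(q,r)\cap P|$.

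\textbf{Step 2: counting.} By Proposition \ref{avgCrossings}, all but $o(n^{4/3})$ edges have $\approx n^{2/3}$ crossings. By Step 1, all but $o(n^{4/3})$ pairs $(p,\gamma)$ with $\gamma\subset C(p)$ an edge therefore have lunes containing between $c_5n^{2/3}$ and $c_5'n^{2/3}$ points, for suitable constants. Call these good edges; there are at least $c n^{4/3}$ of them. Following Proposition \ref{ppc}, restrict to points $p$ whose circle carries $\gtrsim n^{1/3}$ edges. Each circle carries at most about $n$ crossings in total: a circle with $\omega(n)$ crossings would overload the bound $n(n-1)$ only if many circles do so, and this is where the argument of Proposition \ref{ppc} is reused. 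Consequently each circle has at most $\lesssim n^{1/3}$ good edges. Now apply pigeonhole. If fewer than $c_6 n$ circles had at least $c_4 n^{1/3}$ good edges, the total number of good edges would be at most
$$c_6 n\cdot Cn^{1/3} + n\cdot c_4 n^{1/3},$$
which is less than $c n^{4/3}$ once $c_4$ and $c_6$ are chosen small. This gives the required $c_6 n$ typical points.

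\textbf{Main obstacle.} The delicate step is the uniform upper bound of $\lesssim n^{1/3}$ good edges per circle used in the pigeonhole. It is not literally true that every circle has $\lesssim n$ crossings; one only knows that the total is $\le n(n-1)$. I would replace the pointwise bound by a weighted count. Each good edge carries $\gtrsim n^{2/3}$ crossings, so a circle with $k$ good edges carries $\gtrsim k n^{2/3}$ crossings, and hence
$$\sum_p k_p\, n^{2/3} \lesssim n^2.$$
Splitting the sum into circles with $k_p\ge c_4n^{1/3}$ and those below, the heavy circles must absorb $\gtrsim n^{4/3}$ good edges. This alone only lower-bounds the sum $\sum k_p$ over heavy circles, not their number. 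To count them, I would additionally use that a circle has at most $n-1$ points on it, so $k_p\le n$. This gives only $\gtrsim n^{1/3}$ heavy circles. To do better, I would invoke the Szemerédi–Trotter type bound: the number of circles containing $\ge k$ points is $\lesssim n^2/k^3 + n/k$. This shows that circles with $k_p\ge Kn^{1/3}$ carry only $\lesssim n^{4/3}/K$ edges in total. Taking $K$ large, most good edges then lie on circles with $c_4n^{1/3}\le k_p\le Kn^{1/3}$, and the pigeonhole goes through.
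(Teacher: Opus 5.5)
Your argument follows the paper's: translate crossings on the edge $qr$ of $C(p)$ into points of $L(q,r)$, then pigeonhole using the average-crossings proposition and Proposition~\ref{ppc}. You depart from it only at the ``main obstacle,'' and that obstacle is not real. The pointwise bound is literally true. $C(p)$ meets each of the other $n-1$ unit circles in at most two points, so all edges on $C(p)$ together carry at most $2(n-1)$ crossings. This is the same fact that gives the global bound $n(n-1)$. It is also what the paper uses in Proposition~\ref{ppc} when it splits a circle's $\approx n$ crossings into $\approx n^{1/3}$ edges with $\approx n^{2/3}$ crossings each. Since a good edge carries at least $c\,n^{2/3}$ crossings, every circle has at most $2n^{1/3}/c$ good edges, so your first pigeonhole computation already finishes the proof. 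The justification you gave there (``would overload the bound $n(n-1)$ only if many circles do so'') is wrong and should be replaced by this observation. Your workaround via the rich-circle bound $\lesssim n^2/k^3+n/k$ is valid: a dyadic sum shows that circles with at least $Kn^{1/3}$ points carry $\lesssim n^{4/3}/K^2+n\log n$ edges. But it invokes the full unit-circle incidence theorem to prove something elementary.

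Separately, Step 1 is weaker than you state, although the paper's sketch asserts the same thing (``precisely $L(q_j,q_{j+1})$''). The intermediate value theorem only shows that a point of the lune whose circle carries edges produces a crossing on $qr$. The converse ``exactly when'' fails: a circle centered at distance close to $2$ from $p$ can cross a short arc $qr$ twice while $q$ and $r$ both lie outside its disc. In the other direction, a point of $L(q,r)$ whose own circle contains no points of $P$ contributes no crossing at all. So ``crossings $\approx |L(q,r)\cap P|$ up to a factor $2$'' needs an additional argument in both directions. This soft spot is shared with the paper rather than peculiar to your route.
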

\begin{proof}
By Proposition \ref{ppc}, we get that there are at least $c_2 n$ points $p\in P$ with at least $c_3 n^\frac{1}{3}$ points on $C(p).$ Order these points and call them $q_j.$ Proposition \ref{avgCrossings}, we know that most edges have $\approx n^\frac{2}{3}$ crossings. Consider some $q_j$ and its neighbor $q_{j+1}.$ For a point $x$ to contribute a crossing to the edge connecting $q_j$ and $q_{j+1}$ in the geometric drawing of our graph, an arc of $C(x)$ must cross the arc of $C(p)$ between $q_j$ and $q_{j+1}.$ For this to occur, $x$ must either be within a unit distance of $q_j$ and greater than a unit distance from $q_{j+1},$ or vice versa. This region of possible locations for $x$ is then precisely $L(q_j, q_{j+1}).$ See Figure 1 for an illustration. We note however, that we might not have every single edge on $C(p)$ contribute $\approx n^\frac{2}{3}$ crossings, but by Proposition \ref{avgCrossings} and pigeonholing, there must be $\approx n$ points $p$ with $\approx n^\frac{1}{3}$ edges with $\approx n^\frac{2}{3}$ crossings.
\end{proof}
\begin{center}
\includegraphics[scale=1]{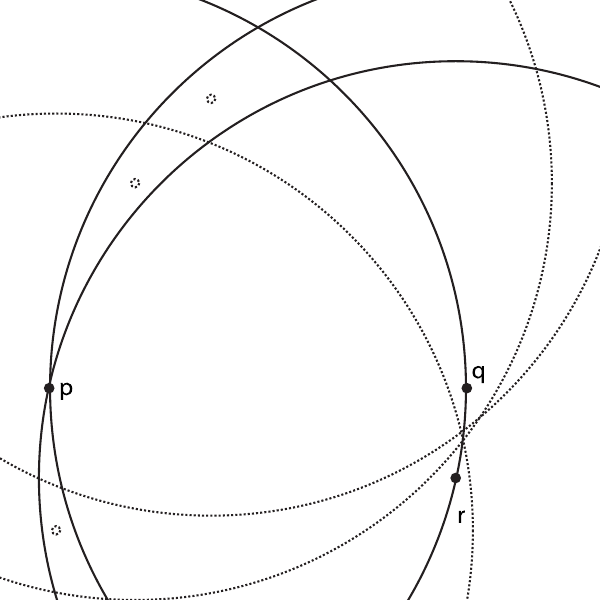}\\
\small{{\bf Figure 2:} The points $q$ and $r$ are at a unit distance from $p$, and hence lie on $C(p).$ The points whose circles cross the edge between $q$ and $r$ are given restricted to the regions between $C(q)$ and $C(r)$, as indicated by the dotted lines.}
\end{center}

\section{Constructing the set}
\subsection{Reducing to a pair of squares}
We begin by following a pigeonholing scheme from Spencer, Szemer\'edi, and Trotter in \cite{SST84}. Suppose that there does exist a set of $n$ points with $\approx n^\frac{4}{3}$ unit distances. Now decompose the plane into disjoint squares with side length $(100\sqrt 2)^{-1}.$ Notice that each circle centered at a point in our set will live on constantly many such squares. Now pick a pair of squares that contributes the most unit distances. Without loss of generality, we can assume that there are $n$ points in each square (if not, we can add up to $n$ points to whichever square has fewer and we still have a set with $cn$ points with enough unit distances), and that the centers of the squares are a unit distance apart. We now restrict our analysis to this pair of squares.

Proposition \ref{typical} tells us that most of our points will live on about $n^\frac{1}{3}$ circles, and these circles will decompose the plane into as many lunes, each with about $n^\frac{2}{3}$ points. Since we are restricting our attention to two squares, we can decompose $P$ into two sets: $A$ will be the set of points of $P$ in one of the squares, while $B$ will be the set of points of $P$ in the other square. We now consider $A\cup B.$ Possibly rotate the point set so that both $A$ and $B$ fit into squares of side-length $1/100$ aligned with the coordinate axes, with the square containing $A$ directly to the left of the square containing $B.$ Call the left square $S_A$ and the right square $S_B.$

\subsection{Introducing $\epsilon$ and $\delta$}
We now look to get a handle on the maximum and minimum distances between points on a given circle. To accomplish this, we will remove points that are too close or too far apart, while retaining a positive proportion of our points and of our unit distances.

Fix positive constants $c_7$ and $c_8$ to be determined later. Define $\delta>0$ to be the minimum height of any horizontal strip $H_\delta$ so that the number of unit distances from points in the set $S_A \cap H_\delta$ to points in $S_B \cap H_\delta$ is at least $c_7 n^\frac{4}{3}.$ Define the point set
$$P_0 := (S_A \cup S_B)\cap H_\delta.$$
Draw unit circles centered at each point in $P_0.$ Proposition \ref{ppc} guarantees that $P_0$ still consists of $\gtrsim n$ points that each lie on $\gtrsim n^\frac{1}{3}$ circles. Arbitrarily label the points of $P_0=\{p_1, p_2, \dots, \}.$ Now, for any $\rho\geq 0$, define $P_\rho$ to be the subset of $P_0$ obtained by pruning points from $P_0$ using the following algorithm. Starting with $p_1,$ consider the points on $C(p_1).$ If any pair of consecutive points on $C(p_1)$ have an arc of length $\leq \rho$ between them, prune the point with lowest index. If $p_2$ was not pruned in the previous step, continue processing points. Notice that any time a point is pruned, we lose $\lesssim n^\frac{1}{3}$ unit distances. Define the constant $\epsilon >0$ to be the largest possible value of $\rho$ we can take such that $P_\rho$ has $\gtrsim c_8n^\frac{4}{3}$ unit distances. Note that $\epsilon$ will depend on our choice of $c_8.$ Finally, we define $E$ to be $P_\epsilon.$

By construction, we see that $E$ is a set of $\gtrsim n$ points determining $\geq c_8 n^\frac{4}{3}$ unit distances that lives within a horizontal strip of height $\delta$, with the property that any unit circle centered at a point in $E$ will have circular arcs between points of length at least $\epsilon.$ Again, by Proposition \ref{ppc}, we have that there must be $\gtrsim n^\frac{1}{3}$ points on most of the circles centered at points in $E$, and consecutive pairs along these circular arcs must be separated by arcs of length at least $\epsilon,$ so we can see that $\delta \gtrsim \epsilon n^\frac{1}{3}.$

\begin{center}
\includegraphics[scale=1]{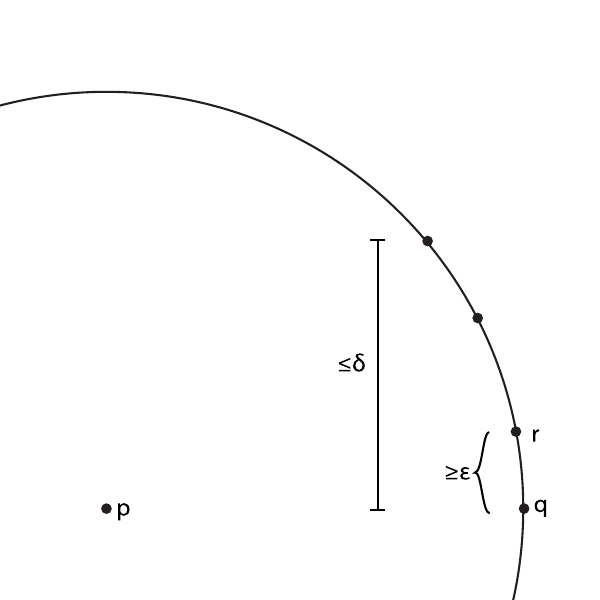}\\
\small{{\bf Figure 3:} The points $q$ and $r$ are at a unit distance from $p$, and hence lie on $C(p).$ The length of the arc between $q$ and $r$ is at least $\epsilon,$ and the points all fit within a horizontal strip of height $\delta.$}
\end{center}

\subsection{Summarizing}
Any of the $\gtrsim n$ typical points $p\in E\cap A$ will essentially partition the other points of $A$ into sets of $\approx n^\frac{2}{3}$ points, with each set in the interior of a different lune emanating from $p$. Moreover, each of the $\approx n^\frac{1}{3}$ circles through $p$ will have $\approx n^\frac{1}{3}$ points from $E.$ Moreover, these circles will meet at $p$ in angles $\geq \epsilon,$ because two circles through $p$ are centered at points on $C(p)$ which must have an arc of length at least $\epsilon$ between them.

\section{An incomplete attempt at a contradiction}
What follows is an incomplete attempt to show that the constraints above will lead to a contradiction. The basic idea is that any set of $n$ points that determines $\gtrsim n^\frac{4}{3}$ unit distances will necessarily have two tightly clustered subsets of $\gtrsim n$ points each, with the properties detailed above. Next, we find the two points, say $x$ and $y,$ that are closest along any relevant circular arc. Call the arc length between them $\epsilon,$ and note that their circles, $C(x)$ and $C(y),$ must meet at the smallest angle (also $\epsilon$) of any pair of relevant crossing circles. We notice that there still must be $\gtrsim n^\frac{2}{3}$ points in the lune defined between $C(x)$ and $C(y),$ and hope that this clustering of points leads to another pair of points, say $x'$ and $y'$, that are then even closer than $\epsilon$ along some relevant circular arc. In what follows, we record some potentially helpful trigonometric estimates.

\subsection{Basic facts from trigonometry}
Recall the Taylor expansion for sine and cosine,
$$\sin x = x-\frac{x^3}{3!}+\frac{x^5}{5!}-\dots$$
$$\cos x = 1-\frac{x^2}{2!}+\frac{x^4}{4!}-\frac{x^6}{6!}+\dots$$

\begin{proposition}\label{distArc}
Given distinct points $a$ and $b$ on a unit circle separated by an arc of length $\theta <\frac{\pi}{2}:$
\begin{enumerate}
\item[(i)] The distance from $a$ to $b$ is $\sqrt{2-2\cos\theta}.$
\item[(ii)] $\theta - \frac{\theta^2}{\sqrt{12}} < \sqrt{2-2\cos\theta} < \theta - \frac{\theta^3}{25}$
\end{enumerate}
\end{proposition}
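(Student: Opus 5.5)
The plan is to reduce everything to the half-angle formula: write the chord length as $2\sin(\theta/2)$, then bound sine above and below using the alternating Taylor series recalled just above.

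\textbf{Part (i).} Place the unit circle at the origin, with $a=(1,0)$ and $b=(\cos\theta,\sin\theta)$. This is harmless, since an arc of length $\theta$ on a unit circle subtends a central angle $\theta$. Then
$$|a-b|^2=(1-\cos\theta)^2+\sin^2\theta=2-2\cos\theta.$$
Equivalently, this is the law of cosines in the isosceles triangle with two unit sides. Using $1-\cos\theta=2\sin^2(\theta/2)$, we also get
$$\sqrt{2-2\cos\theta}=2\sin(\theta/2).$$
Part (ii) will be proved in this form.

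\textbf{Part (ii).} Set $x=\theta/2$, so $0<x<\pi/4<1$. For $0<x\le 1$ the terms of the sine series decrease in absolute value. The standard alternating-series remainder estimate therefore gives
$$x-\frac{x^3}{6}<\sin x<x-\frac{x^3}{6}+\frac{x^5}{120}.$$
Multiplying by $2$ and substituting $x=\theta/2$ yields
$$\theta-\frac{\theta^3}{24}<2\sin(\theta/2)<\theta-\frac{\theta^3}{24}+\frac{\theta^5}{1920}.$$

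It then remains to compare these with the bounds claimed in the statement.

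\emph{Lower bound.} We need $\theta^3/24\le\theta^2/\sqrt{12}$. This is equivalent to $\theta\le 24/\sqrt{12}=4\sqrt3$, which certainly holds since $\theta<\pi/2$. The strict inequality is inherited from the strict alternating-series bound.

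\emph{Upper bound.} We need
$$\frac{\theta^3}{24}-\frac{\theta^5}{1920}\ge\frac{\theta^3}{25}.$$
Since $\frac{1}{24}-\frac{1}{25}=\frac{1}{600}$, this is equivalent to $\frac{\theta^3}{600}\ge\frac{\theta^5}{1920}$, that is, $\theta^2\le 3.2$. This holds because $\theta^2<\pi^2/4\approx 2.47$.

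\textbf{Main obstacle.} The only step needing care is this last upper-bound comparison. The constant $1/25$ is close to the true cubic coefficient $1/24$, so the quintic correction must be retained rather than discarded, and the hypothesis $\theta<\pi/2$ is used exactly here. The alternating-series remainder bound also requires the terms of the series to decrease; this holds because $x=\theta/2<1$.
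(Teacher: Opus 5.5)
Your proof is correct, and it reaches (ii) by a somewhat different route than the paper. The paper never removes the square root. It bounds the radicand using the cosine series truncated after the $\theta^4$ and $\theta^6$ terms, obtaining $\theta^2-\theta^4/12 \le 2-2\cos\theta \le \theta^2-\theta^4/12+\theta^6/360$. It then compares these with perfect squares: $\theta^2-\theta^4/12=(\theta-\theta^2/\sqrt{12})(\theta+\theta^2/\sqrt{12})>(\theta-\theta^2/\sqrt{12})^2$, and $\theta^2-\theta^4/12+\theta^6/360<(\theta-\theta^3/25)^2$, the latter requiring $\theta^2<2250/795\approx 2.83$.

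You instead use $\sqrt{2-2\cos\theta}=2\sin(\theta/2)$ to eliminate the radical, so both comparisons become linear inequalities between polynomials in $\theta$. You also state the monotonicity of the terms that justifies the strict alternating-series bounds, which the paper uses tacitly. Your route gives the sharper intermediate lower bound $\theta-\theta^3/24$. This shows that the quadratic error $\theta^2/\sqrt{12}$ in the statement comes from the paper's factoring step, not from the true behavior. The paper's route has the mild advantage of working directly with the expression from part (i), without an extra identity.

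In both arguments the hypothesis $\theta<\pi/2$ is genuinely needed only for the upper bound. There you need $\theta^2\le 3.2$ and the paper needs $\theta^2<2250/795$. The lower bound needs only a much weaker size restriction: $\theta\le 4\sqrt3$ for you, and $\theta<\sqrt{12}$ for the paper.
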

\begin{proof}
$(i)$ The arc of the circle subtends the angle $\theta,$ so we can directly apply the law of cosines to the triangle formed by $a, b,$ and the center of the circle.

$(ii)$
With this in mind, estimate from below by
$$
\sqrt{2-2\cos \theta} \geq \sqrt{2-2\left(1-\frac{\theta^2}{2!}+\frac{\theta^4}{4!}\right)}=\sqrt{\theta^2-\frac{\theta^4}{12}}
$$
$$
=\sqrt{\left(\theta-\frac{\theta^2}{\sqrt{12}}\right)\left(\theta+\frac{\theta^2}{\sqrt{12}}\right)}> \sqrt{\left(\theta-\frac{\theta^2}{\sqrt{12}}\right)^2}.
$$
Similarly, we estimate from above by
$$
\sqrt{2-2\cos \theta} \leq \sqrt{2-2\left(1-\frac{\theta^2}{2!}+\frac{\theta^4}{4!}-\frac{\theta^6}{6!}\right)}
=\sqrt{\theta^2-\frac{\theta^4}{12}+\frac{\theta^6}{360}}
$$
$$
< \sqrt{\theta^2-\frac{2\cdot\theta^4}{25}+\frac{\theta^6}{625}} = \sqrt{\left(\theta - \frac{\theta^3}{25}\right)^2},
$$
where in the last line, we used the fact that $\theta<\pi/2< \sqrt{2250/795}.$
\end{proof}

\begin{center}
\includegraphics[scale=1]{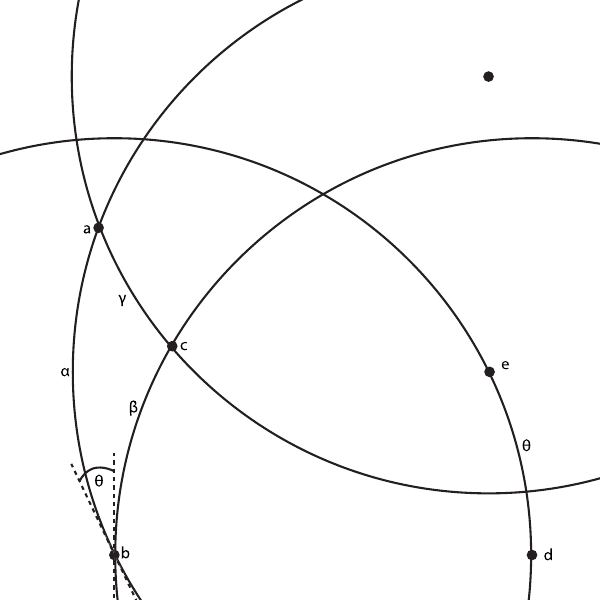}\\
\small{{\bf Figure 4:} Setting for Proposition \ref{curvTri}.}
\end{center}

We next give an estimate for certain arcs of circles that will come up in the calculations to follow. To get a handle on the arcs in question, we consider certain types of curvilinear triangles. If we are know two arc lengths, we can estimate the third.
\begin{lemma}\label{curvTri}
Suppose that each pair of the points $a, b,$ and $c$ is on a different unit circle, with the length of the arc from $a$ to $b$ measuring $\alpha<\pi/100,$ and the length of the arc from $b$ to $c$ measuring $\beta<\pi/100,$ and the lines tangent to the circles meeting at $b$ have an angle of $\theta<\pi/100.$ Then the distance from $a$ to $c$ is
$$|a-c| = \left(\frac{1}{2}\left|\alpha^2-2\alpha\theta-\beta^2\right|+\left|\alpha-\beta\right|\right)+R,$$
where $|R|\lesssim (\alpha+\beta+\theta)^3.$
\end{lemma}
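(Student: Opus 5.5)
I will prove this by a direct coordinate computation, expanding everything to second order with the Taylor series recalled before Proposition \ref{distArc} and absorbing all cubic terms into $R$. Since the statement is invariant under rigid motions, place $b$ at the origin. Rotate so that the circle through $a$ and $b$ has horizontal tangent at $b$. Then the circle through $b$ and $c$ has tangent at $b$ making angle $\theta$ with the horizontal. I take the configuration to be the one in Figure 4: $a$ and $c$ leave $b$ in the same tangential direction, and both circles bend to the same side. I will fix this orientation explicitly, since the signs in $\alpha^2-2\alpha\theta-\beta^2$ depend on it.

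\textbf{Step 1: coordinates.} Moving along a unit circle by arc length $\alpha$ from a point where the tangent is $e_1$ gives
$$a=(\sin\alpha,\ 1-\cos\alpha)=\left(\alpha,\ \tfrac{\alpha^2}{2}\right)+O(\alpha^3).$$
Similarly $c$ is the image under rotation by $\theta$ of $\left(\beta,\tfrac{\beta^2}{2}\right)+O(\beta^3)$. Expanding $\cos\theta=1+O(\theta^2)$ and $\sin\theta=\theta+O(\theta^3)$, I get
$$c=\left(\beta,\ \tfrac{\beta^2}{2}+\beta\theta\right)+O\big((\beta+\theta)^3\big).$$
The only second-order cross term is $\beta\theta$ in the vertical coordinate. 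The term $\beta\theta$ in the horizontal coordinate does not occur, because $\cos\theta-1$ is already quadratic. If instead the convention is that the tangent of the first circle is rotated, the cross term becomes $\alpha\theta$; I will fix the convention that matches the stated formula, namely the cross term $\alpha\theta$.

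\textbf{Step 2: the difference vector.} Subtracting, $a-c=(X,Y)$ with
$$X=\alpha-\beta+O(\text{cubic}),\qquad Y=\tfrac12\left(\alpha^2-2\alpha\theta-\beta^2\right)+O(\text{cubic}).$$
So the two summands in the statement are exactly $|X|$ and $|Y|$ up to cubic error, and it remains to compare $\sqrt{X^2+Y^2}$ with $|X|+|Y|$.

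\textbf{Step 3: the main obstacle.} In general $\sqrt{X^2+Y^2}$ and $|X|+|Y|$ differ by a factor up to $\sqrt2$, not by a cubic error. I would therefore split into cases.

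\emph{Case A: $|\alpha-\beta|\lesssim(\alpha+\beta+\theta)^2$.} Then $|X|$ is at most quadratic. I would first check whether $X$ is in fact cubic in the intended configuration, for instance $\alpha=\beta$ up to higher order, as when $a$ and $c$ are consecutive points on circles through a typical point. In that case $|a-c|=|Y|+O(\text{cubic})$, and the formula holds with $|X|$ absorbed into $R$.

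\emph{Case B: $|X|$ dominates.} Then
$$|a-c|=|X|+O\!\left(\frac{Y^2}{|X|}\right),$$
and $Y^2/|X|$ is at most cubic only when $|X|\gtrsim \alpha+\beta+\theta$.

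In the intermediate regime the statement as written can fail, e.g.\ when $|X|\approx|Y|$ are both quadratic. I therefore expect to have to either weaken the conclusion to $|a-c|\approx |X|+|Y|$, which follows from $\tfrac{1}{\sqrt2}(|X|+|Y|)\le\sqrt{X^2+Y^2}\le |X|+|Y|$ plus the cubic errors, or add a hypothesis placing us in Case A. This reconciliation is the step I would scrutinize most carefully. The Taylor bookkeeping in Steps 1 and 2 is routine, in the style of Proposition \ref{distArc}.
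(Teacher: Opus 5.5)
Your Steps 1--2 reproduce the paper's computation. The paper puts $b$ at the origin and takes $d=(1,0)$ and $e=(\cos\theta,\sin\theta)$ on $C(b)$, with $c\in C(d)$ and $a\in C(e)$. This fixes the orientation that the statement leaves implicit, and it is the convention you end up choosing. The paper obtains a horizontal difference $\frac12(\alpha^2-2\alpha\theta-\beta^2)+O((\alpha+\beta+\theta)^4)$ and a vertical difference $(\alpha-\beta)+O((\alpha+\beta+\theta)^3)$, which are your $X$ and $Y$ with the axes swapped. It then concludes in one line that $\sqrt{X^2+Y^2}=|X|+|Y|+R$ with $|R|\lesssim(\alpha+\beta+\theta)^3$. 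That is exactly the step you declined to take, and you were right to decline: it is false, and so is the lemma as stated. Your proposal does not prove the statement, but no argument could.

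Your diagnosis is still too mild. For all real $X,Y$,
$$\tfrac12\min(|X|,|Y|)\le |X|+|Y|-\sqrt{X^2+Y^2}\le \min(|X|,|Y|).$$
The Euclidean norm is $1$-Lipschitz, so the Taylor remainders move $|a-c|$ by only $O((\alpha+\beta+\theta)^3)$. Hence the formula holds with cubic $R$ if and only if $|\alpha-\beta|$ or $\frac12|\alpha^2-2\alpha\theta-\beta^2|$ is itself $O((\alpha+\beta+\theta)^3)$. That is the cubic version of your Case A, or the part of Case B in which $|\alpha^2-2\alpha\theta-\beta^2|$ happens to be cubic.

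So the failure is not confined to an intermediate regime. In Case B, your own estimate $|a-c|=|\alpha-\beta|+O(\text{cubic})$ contradicts the formula by the full quadratic term rather than confirming it. For an exact counterexample in the paper's coordinates, take $\theta=\beta$ and $\alpha=2\beta$. Then $a=(0,2\sin\beta)$ and $c=(1-\cos\beta,\sin\beta)$, so $|a-c|=\sqrt{2-2\cos\beta}=\beta+O(\beta^3)$, while the right-hand side is $\beta+\frac12\beta^2$. This forces $|R|\ge\frac12\beta^2-O(\beta^3)$, which is not $\lesssim(4\beta)^3$ as $\beta\to0$.

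What your computation legitimately yields is
$$|a-c|=\sqrt{(\alpha-\beta)^2+\tfrac14\left(\alpha^2-2\alpha\theta-\beta^2\right)^2}+O\big((\alpha+\beta+\theta)^3\big).$$
Your proposed two-sided comparability with $|\alpha-\beta|+\frac12|\alpha^2-2\alpha\theta-\beta^2|$, up to the same additive error, follows from it. One of these two is the correct replacement for the lemma. Downstream uses such as Corollary~\ref{thirdArc} would have to be recast in that comparability form.
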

The main takeaway of this result for the task at hand is that there are essentially two dominant behaviors in the relevant range of parameters. If $\alpha$ and $\beta$, are nearly equal (that is, if $|\alpha-\beta|\lesssim |\alpha\theta|$), then $|a-c|$ behaves like an arc of a circle of radius $\alpha$ subtending an angle $\theta.$ If $\alpha$ and $\beta$ are sufficiently different (that is, if $|\alpha-\beta|\gtrsim |\alpha\theta|$), then $|a-c|$ looks more like their difference.
\begin{proof}
Let $d$ and $e$ be points on $C(b),$ so that $c\in C(d)$ and $a\in C(e).$ Without loss of generality, we can assume that $b$ is the origin, $d=(1,0),$ and $e=(\cos\theta, \sin \theta).$ From here, we get that $c=(1-\cos\beta,\sin \beta),$ and $a=(\cos\theta-\cos(\alpha-\theta),\sin\theta+\sin(\alpha-\theta))$. We then we write the distance squared in terms of the horizontal difference, which we denote $X$ and the vertical difference, which we call $Y$.
\begin{align*}
|a-c|^2&=|\cos\theta-\cos(\alpha-\theta)-1-\cos\beta|^2+|\sin\theta+\sin(\alpha-\theta)-\sin \beta|^2\\
&= X^2+Y^2.
\end{align*}
We now write $X$ using the first three terms of the Taylor expansion for cosine and a small error, $R_X$, where $|R_X|\leq c_{10}(\alpha+\beta+\theta)^6,$ for some $c_{10}\in [0,1/100].$
\begin{align*}
X&=|\cos\theta-\cos(\alpha-\theta)-(1-\cos\beta)|\\
&= \left|\left(1-\frac{\theta^2}{2} +\frac{\theta^4}{24}\right)- \left(1-\frac{(\alpha-\theta)^2}{2} +\frac{(\alpha-\theta)^4}{24}\right)-1+\left(1-\frac{\beta^2}{2}+\frac{\beta^4}{24} \right)\right|+R_X\\
&=\left|-\frac{\theta^2}{2} +\frac{\theta^4}{24}+\frac{(\alpha-\theta)^2}{2} -\frac{(\alpha-\theta)^4}{24}-\frac{\beta^2}{2}+\frac{\beta^4}{24}\right|+R_X\\
&=\left|\frac{\alpha^2-2\alpha\theta}{2}-\frac{\beta^2}{2}+\frac{\theta^4}{24} -\frac{(\alpha-\theta)^4}{24}+\frac{\beta^4}{24}\right|+R_X\\
&= \left|\frac{\alpha^2-2\alpha\theta-\beta^2}{2}\right|+R_X',\\
\end{align*}
where $|R_X'|\leq c_{11}(\alpha+\beta+\theta)^4,$ for some $c_{11}\in [0,1/10].$ Write $Y$ using the first three terms of the Taylor expansion for sine and another small error, $R_Y$, where $|R_Y|\leq c_{12}(\alpha+\beta+\theta)^5,$ for some $c_{12}\in [0,1/100].$
\begin{align*}
Y&=|\sin\theta+\sin(\alpha-\theta)-\sin \beta|\\
&=\left| \left(\theta-\frac{\theta^3}{6} \right)+\left((\alpha-\theta)-\frac{(\alpha-\theta)^3}{6} \right)-\left(\beta-\frac{\beta^3}{6} \right)\right|+R_Y\\
&=\left| \theta-\frac{\theta^3}{6} +(\alpha-\theta)-\frac{(\alpha-\theta)^3}{6} -\beta+\frac{\beta^3}{6}\right|+R_Y\\
&=\left|\alpha-\beta-\frac{\theta^3}{6}-\frac{(\alpha-\theta)^3}{6} +\frac{\beta^3}{6}\right|+R_Y\\
&=\left|\alpha-\beta\right|+R_Y',\\
\end{align*}
where $|R_Y'|\leq c_{13}(\alpha+\beta+\theta)^3,$ for some $c_{13}\in [0,1/3].$
Plugging in the expressions for $X$ and $Y$ we get
\begin{align*}
|a-c|&=\sqrt{X^2+Y^2} = \sqrt{\left(\left|\frac{\alpha^2-2\alpha\theta-\beta^2}{2}\right|+R_X'\right)^2+\left(\left|\alpha-\beta\right| +R_Y'\right)^2}\\
&=\frac{1}{2}\left|\alpha^2-2\alpha\theta-\beta^2\right|+\left|\alpha-\beta\right|+R,
\end{align*}
where $|R|\lesssim (\alpha + \beta + \theta)^3.$
\end{proof}

One clear consequence of Lemma \ref{curvTri} is the following.
\begin{corollary}\label{thirdArc}
Suppose that each pair of the points $a, b,$ and $c$ is on a different unit circle, with the length of the arc from $a$ to $b$ measuring $\alpha<10^{-10},$ and the length of the arc from $b$ to $c$ measuring $\beta<10^{-10},$ and the lines tangent to the circles meeting at $b$ have an angle of $\theta<10^{-10}.$ Then the distance from $a$ to $c$ is
$$
|a-c| + R'=\left\{\begin{tabular}{l l}$|\alpha-\beta|,$& $|\alpha-\beta|\geq \alpha\theta$\\ $|\alpha\theta|,$& $|\alpha-\beta|\leq \alpha\theta$\\ \end{tabular}\right.,
$$
where $|R'|\lesssim (\alpha + \beta)^2.$
\end{corollary}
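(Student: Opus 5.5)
We derive Corollary \ref{thirdArc} directly from Lemma \ref{curvTri}. The hypotheses here are stronger: $\alpha,\beta,\theta<10^{-10}<\pi/100$. So the lemma gives
$$|a-c| = \tfrac12\left|\alpha^2-2\alpha\theta-\beta^2\right| + |\alpha-\beta| + R,\qquad |R|\lesssim(\alpha+\beta+\theta)^3 .$$
The task is to show that, up to an error $R'$ with $|R'|\lesssim(\alpha+\beta)^2$, this expression collapses to $|\alpha-\beta|$ or to $|\alpha\theta|$ according to which of the two is larger.

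First we control the quadratic term by writing $\alpha^2-\beta^2-2\alpha\theta = (\alpha-\beta)(\alpha+\beta) - 2\alpha\theta$. Since $(\alpha+\beta)^2$ is allowed in the error, the only genuinely quadratic piece we must track is $\alpha\theta$. We then split into the two cases of the statement.

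\textbf{Case $|\alpha-\beta|\geq\alpha\theta$.} Bound $\tfrac12|\alpha^2-2\alpha\theta-\beta^2|\le \tfrac12|\alpha-\beta|(\alpha+\beta)+\alpha\theta$. The first piece is $\lesssim(\alpha+\beta)^2$. The second is at most $|\alpha-\beta|$, so it is not automatically small. Here I would use that $\alpha\theta\le\alpha\cdot 10^{-10}$ is of lower order than the linear term, and absorb it either into $R'$ or into a factor $(1+O(10^{-10}))$ multiplying $|\alpha-\beta|$. The cubic remainder $R$ is handled similarly.

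\textbf{Case $|\alpha-\beta|\le\alpha\theta$.} Now $|\alpha-\beta|$ is itself $\le\alpha\theta$, and $\tfrac12|\alpha^2-\beta^2-2\alpha\theta| = \alpha\theta + O(|\alpha-\beta|(\alpha+\beta)) = \alpha\theta+O((\alpha+\beta)^2)$. The total is therefore comparable to $\alpha\theta$, between $\alpha\theta$ and $2\alpha\theta$ up to $R'$.

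The main obstacle is the remainder $R$. Its bound $(\alpha+\beta+\theta)^3$ involves $\theta$, while $R'$ is claimed to be $\lesssim(\alpha+\beta)^2$ with no $\theta$. When $\theta\gg\alpha+\beta$, the term $\theta^3$ is not dominated by $(\alpha+\beta)^2$. To handle this I would return to the proof of Lemma \ref{curvTri} and check that every error term carries a factor of $\alpha$ or $\beta$. Geometrically, when $\alpha=\beta=0$ we have $a=b=c$ exactly, so $R$ should be $\lesssim(\alpha+\beta)(\alpha+\beta+\theta)^2$. With $\theta<10^{-10}$, this is $\lesssim(\alpha+\beta)\cdot 10^{-20}$ plus $(\alpha+\beta)^3$, which I would then absorb. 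Getting this absorption honestly into an error of order $(\alpha+\beta)^2$, rather than $(\alpha+\beta)\theta^2$, is the delicate point.

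If that fails, I would settle for reading the displayed cases as holding up to multiplicative constants, i.e.\ $|a-c|\approx\max(|\alpha-\beta|,\alpha\theta)$, with an additive error $O((\alpha+\beta)^2)$. This is the form the later arguments seem to need, and it follows cleanly from the case analysis above.
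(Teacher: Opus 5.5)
Your proposal has a genuine gap: it does not prove the statement, and the steps you flag as delicate do not just need more care. They fail.

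\begin{itemize}
\item In your first case, $\theta<10^{-10}$ makes $\alpha\theta$ small compared with $\alpha$, not with $|\alpha-\beta|$. The case hypothesis allows $|\alpha-\beta|=\alpha\theta$, so $\alpha\theta$ cannot be absorbed into $(1+O(10^{-10}))|\alpha-\beta|$. Nor, when $\theta\gg\alpha+\beta$, can it be absorbed into $(\alpha+\beta)^2$.
\item Your second case ends with $|a-c|$ between $\alpha\theta$ and $2\alpha\theta$. That is not the asserted identity $|a-c|=\alpha\theta-R'$.
\item Your sharpened remainder still contains $(\alpha+\beta)\theta^2$, which is not $\lesssim(\alpha+\beta)^2$ once $\alpha+\beta\ll\theta^2$.
\item The formula you start from is itself wrong at quadratic order: the last step in the proof of Lemma \ref{curvTri} replaces $\sqrt{X^2+Y^2}$ by $X+Y$.
\end{itemize}

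None of this is repairable, because the corollary is false with an absolute implied constant. Take $\alpha=\beta$, which falls in the second case. In the lemma's coordinates, $a$ is the image of $c$ under rotation by $\theta$ about $b$. Hence $|a-c|=4\sin(\alpha/2)\sin(\theta/2)$ and $R'=\alpha\theta-|a-c|\geq\alpha\theta^3/25$. This is not $\lesssim 4\alpha^2$ uniformly as $\alpha\to0$ with $\theta$ fixed.

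The failure is already at quadratic order. Take $\beta=\alpha(1-\theta)$, where both cases assert $|a-c|+R'=\alpha\theta$. Then $a-c=(-\alpha\theta,\alpha\theta)+O(\alpha^2+\alpha\theta^2)$, so $|R'|\approx(\sqrt{2}-1)\alpha\theta$.

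The paper's own proof applies Lemma \ref{curvTri} and simply compares the two dominant terms, so it has the same defects. It treats what is really the Euclidean norm of the two terms as their maximum, and it never removes the $\theta^3$ from $R$.

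Your fallback is the right salvage, and it can be proved honestly by bypassing the lemma. By product formulas, the coordinates of $a-c$ are exactly $X=2\sin(\alpha/2)\sin(\alpha/2-\theta)-2\sin^2(\beta/2)$ and $Y=2\sin(\alpha/2)\cos(\theta-\alpha/2)-\sin\beta$. So, as you suspected, every error term carries a factor of $\alpha$ or $\beta$:
\[X=-\alpha\theta+O\bigl((\alpha+\beta)^2+\alpha\theta^3\bigr),\qquad Y=\alpha-\beta+O\bigl((\alpha+\beta)^3+\alpha\theta^2\bigr).\]
The missing idea is to absorb the $\theta$-dependent errors multiplicatively, via $\alpha\theta^2\leq\theta\cdot\alpha\theta$, instead of additively into $(\alpha+\beta)^2$. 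This gives
\[\Bigl||a-c|-\sqrt{(\alpha\theta)^2+(\alpha-\beta)^2}\Bigr|\lesssim(\alpha+\beta)^2+\theta\cdot\alpha\theta.\]
Consequently $|a-c|$ equals $\max(|\alpha-\beta|,\alpha\theta)$ up to a factor in $[1-O(\theta),\sqrt{2}+O(\theta)]$ and an additive $O((\alpha+\beta)^2)$. That is the correct version of the ``two dominant behaviors'' remark after Lemma \ref{curvTri}, but it is strictly weaker than the piecewise identity in the statement.
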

\begin{proof}
By applying Lemma \ref{curvTri}, we get that
$$|a-c| = \left(\frac{1}{2}\left|\alpha^2-2\alpha\theta-\beta^2\right|+\left|\alpha-\beta\right|\right)+R,$$
where $|R|\lesssim (\alpha+\beta+\theta)^3.$ Now compare the two dominant terms:
$$\frac{1}{2}\left|\alpha^2-2\alpha\theta-\beta^2\right| = \left|\alpha-\beta\right|.$$
Notice that equality holds when
$$\alpha\theta = |\alpha-\beta|+\frac{|\alpha^2-\beta^2|}{2},$$
and the desired result follows
\end{proof}

\section{Szemer\'edi-Trotter sharpness}
We begin with the celebrated result of Szemer\'edi, and Trotter, from \cite{ST83}. Given a point $p$ and a line $\ell,$ the pair $(p,\ell)$ is called an {\bf incidence} if $p\in\ell.$ Given a set of points $P$ and a set of lines $L$, we often care about the total number of incidences between the points from $P$ and the lines from $L$, denoted $I(P,L).$
\begin{theorem}\label{ST}
Given any set of points $P$ and set of lines $L$ in $\mathbb R^2,$ we have
$$I(P,L) \lesssim (|P||L|)^\frac{2}{3}+|P|+|L|.$$
\end{theorem}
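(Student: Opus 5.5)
We follow Sz\'ekely's crossing-number approach, exactly as in the sketch of Theorem \ref{SST}, with lines in place of unit circles. First, discard every line of $L$ that contains at most one point of $P$. These lines contribute at most $|L|$ incidences in total, which is absorbed into the $|L|$ term. Next, build a graph $G$ whose vertex set is $P$. For each remaining line $\ell$ containing $k_\ell\geq 2$ points of $P$, join consecutive points of $P\cap \ell$ along $\ell$ by the straight segment between them, which gives $k_\ell-1$ edges. Summing over the lines,
$$|E| = \sum_{\ell} (k_\ell - 1) \geq I(P,L) - |L|.$$
Distinct lines give distinct edges, since two points determine a unique line. So $G$ is a simple graph drawn in the plane with straight edges.

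Now split into two cases, mirroring the proof of Theorem \ref{SST}. If $|E|\leq 4|V| = 4|P|$, then $I(P,L)\leq 4|P|+|L|$ and we are done. Otherwise Lemma \ref{crossingNumber} applies and gives
$$cr(G)\geq \frac{|E|^3}{100|P|^2}.$$
For the upper bound on the crossing number, use the fact that two distinct lines meet in at most one point. In our drawing, each crossing of two edges occurs at the intersection of their two supporting lines. Therefore the number of crossings is at most $\binom{|L|}{2}\leq |L|^2$, and $cr(G)\leq |L|^2$.

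Combining the two bounds gives $|E|^3\leq 100|P|^2|L|^2$, so $|E|\lesssim (|P||L|)^{2/3}$. Hence $I(P,L)\leq |E|+|L|\lesssim (|P||L|)^{2/3}+|L|$. Together with the first case, this yields the stated bound.

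The only point needing care is the accounting. Each crossing point of two lines yields at most one crossing pair of edges, because the edges on a given line are disjoint open segments. We must also use the drawing's own crossing count, not the optimal one, as an upper bound for $cr(G)$. Both of these are routine. I do not expect a genuine obstacle: the argument is the unit-circle proof with ``at most two intersections per pair of circles'' replaced by ``at most one per pair of lines.''
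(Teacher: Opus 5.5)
Your proof is correct and is essentially the paper's argument. It is Sz\'ekely's crossing-number proof: vertices $P$, edges the segments between consecutive points on each line, the dichotomy $|E|\le 4|V|$ versus Lemma \ref{crossingNumber}, and $cr(G)\le |L|^2$ because two lines meet at most once. Your bookkeeping is slightly more careful than the paper's, since you discard lines with at most one point to get $|E|\ge I(P,L)-|L|$ instead of asserting equality, and you check that $G$ is simple.
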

We sketch a of the proof of Theorem \ref{ST} due to Sz\' ekely, from \cite{Szek}. It relies on Lemma \ref{crossingBound} above.
\begin{proof}
To prove Theorem \ref{ST}, we construct a graph out of any set of $n$ points $P$ and any set of $m$ lines $L$. We will assume that every point from $P$ is on at least one line from $L$, and every line from $L$ has at least one from $P$, as the bound is even sharper otherwise. Now let the points be the vertices of the graph, so $|V|=n.$ The line segments connecting consecutive points will be edges. Because every line with $k$ points on it contributes $k-1$ edges, the edges are exactly the total number of incidences less the total number of lines. Therefore, $|E|$ is exactly $I-|L|$. Now either $|E|\leq 4|V|,$ which implies that the number of incidences is no more than $\lesssim|P|,$ or we can employ Lemma \ref{crossingNumber}. In the latter case, notice that since the edges are line segments that come from the lines in $L$, so we can have no more than one crossing from any pair of lines. There are exactly $\leq|L|^2$ pairs of circles, so there can be no more than $|L|^2$ crossings. We plug in the various quantities and get
\begin{equation}\label{crossingBound}
\frac{|E|^3}{100|V|^2}\leq cr(G)\leq |L|^2,
\end{equation}
which gives us that $|E|=(I-|L|)\lesssim (|L||P|)^\frac{2}{3},$
yielding the desired bound.
\end{proof}
We call a pair of sets $(P,L)$ where $P$ is a set of points and $L$ is a set of lines a {\bf ST-sharp} pair if they have $I(P,L)\gtrsim (|P||L|)^\frac{2}{3}.$

\subsection{Average crossings per edge and consequences}
Consider a pair of sets in $\mathbb R^2$: $P$, consisting of $n$ points and $L,$ consisting of $n$ lines in the plane with $\gtrsim n^\frac{4}{3}$ incidences. We now claim that in the associated graph for such a set, almost every edge must have about $n^\frac{2}{3}$ crossings.
\begin{proposition}\label{avgCrossings}
In the graph associated to the point set, there are no more than $o\left(n^\frac{4}{3}\right)$ edges with $o\left( n^\frac{2}{3}\right)$ crossings or with $\omega\left(n^\frac{2}{3}\right)$ crossings.
\end{proposition}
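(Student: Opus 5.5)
The plan is to argue by contradiction, separately for the two kinds of atypical edges. Throughout, $G=G(V,E)$ is the graph built in the proof of Theorem \ref{SST} from a set $P$ of $n$ points with $c_1n^\frac{4}{3}$ unit distances. Its vertices are the points, and its edges are the arcs of the unit circles $C(p)$ between consecutive points of $P$ on $C(p)$. So $|V|=n$ and $|E|\approx n^\frac{4}{3}$, and we work with the concrete drawing of $G$ given by these arcs.

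To make the asymptotic statement precise, I will read it as follows. For every constant $c>0$ and every function $m(n)=o(n^\frac{2}{3})$, for large $n$ there cannot be $cn^\frac{4}{3}$ edges each carrying at most $m(n)$ crossings in this drawing. Symmetrically, for every $M(n)=\omega(n^\frac{2}{3})$ there cannot be $cn^\frac{4}{3}$ edges each carrying at least $M(n)$ crossings.

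The one geometric input is an upper bound on the crossings actually present in the drawing. Two distinct unit circles meet in at most two points, so the drawing has at most $2\binom{n}{2}=n(n-1)$ crossings in total. This counts crossings of the given drawing, which is exactly what we need, and it is stronger than a bound on $cr(G)$ alone.

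\textbf{Few crossings.} Suppose $E'\subset E$ has $|E'|\geq cn^\frac{4}{3}$ edges, each crossed at most $m=o(n^\frac{2}{3})$ times. Let $G'=(V,E')$ and draw it with the inherited arcs. Every crossing in this drawing lies on some edge of $E'$, so
$$cr(G')\leq |E'|\,m = o\left(|E'|n^\frac{2}{3}\right).$$
Since $|E'|\gtrsim n^\frac{4}{3}>4n$ for large $n$, Lemma \ref{crossingNumber} applies and gives
$$cr(G')\geq \frac{|E'|^3}{100n^2}.$$
Comparing the two bounds yields $|E'|^2\lesssim m\,n^2=o(n^\frac{8}{3})$, so $|E'|=o(n^\frac{4}{3})$, which is the contradiction.

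\textbf{Many crossings.} Suppose $E''\subset E$ has $|E''|\geq cn^\frac{4}{3}$ edges, each crossed at least $M=\omega(n^\frac{2}{3})$ times. Each crossing point of the drawing lies on exactly two edges. Summing crossings over the edges of $E''$ therefore counts each crossing at most twice, so the drawing has at least $|E''|M/2=\omega(n^2)$ crossings. This contradicts the bound of $n(n-1)$ crossings.

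I expect the main obstacle to be bookkeeping rather than ideas. First, crossings must be counted in the fixed circular-arc drawing and not merely through $cr(G)$; the bound of at most two intersections per pair of circles is a statement about this drawing. Second, points where three or more circles pass through a point of $P$ are vertices of the drawing, not crossings, so they do not inflate the count. Third, double-counting requires care in the second case: two circles meeting at a non-vertex point give one crossing, shared by exactly two edges. Finally, I would check that repeated arcs are harmless. When two points are consecutive on two different circles, they contribute two distinct arc edges; this is consistent with $|E|$ being twice the number of unit distances, so Lemma \ref{crossingNumber}, read for the multigraph of arcs as in the proof of Theorem \ref{SST}, still applies.
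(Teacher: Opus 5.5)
Your proposal is correct and follows the paper's proof. The sparsely crossed edges give a subgraph $G'$ with too few crossings for Lemma \ref{crossingNumber}, and the heavily crossed edges would force more than the $n(n-1)$ crossings that pairs of unit circles allow. Counting crossings in the fixed arc drawing, rather than writing $cr(G)=\omega(n^2)$ as the paper loosely does, is the right refinement. Two small corrections: Lemma \ref{crossingNumber} applies to the arc multigraph because edge multiplicities are at most $4$ (two points lie on at most two unit circles centred in $P$, with at most two arcs on each), not because $|E|$ is twice the number of unit distances. Also, crossings should be counted as crossing pairs of arcs, since several circles can be concurrent at a point outside $P$.
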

\begin{proof}
Both claims will follow by way of contradiction. For the first, suppose that there is a subset $E' \subset E$ of $cn^\frac{4}{3}$ edges with $m = o\left(n^\frac{2}{3}\right)$ crossings. Then we could construct a subgraph of $G$ called $G',$ with the same vertex set, but this smaller edge set, $E'.$ We count the total crossings of $G'$ by simply adding up the number of crossings per edge, giving us
$$cr(G') = o\left(|E'| n^\frac{2}{3}\right) =o \left(n^2\right).$$
Notice that we still have $|E'|>4|V|,$ so Lemma \ref{crossingNumber} tells us that
$$cr(G')\gtrsim \frac{|E'|^3}{|V|^2}\approx n^2,$$
which contradicts the upper bound on $cr(G').$
The second claim follows by a similar argument. Suppose there is a subset, $E'' \subset E$ of $cn^\frac{4}{3}$ edges with $m =\omega\left(n^\frac{2}{3}\right)$ crossings. We again count the total number of crossings and get that
$$cr(G)=\omega\left(|E''|n^\frac{2}{3}\right)=\omega\left(n^2\right),$$
but this violates the upper bound of $n(n-1)$ as given in \eqref{crossingBound}.
\end{proof}
\begin{proposition}\label{ppl}
Any pair of sets of $n$ points and $n$ lines in the plane with $\gtrsim n^\frac{4}{3}$ incidences must consist of at least $\gtrsim n$ points lying on $\gtrsim n^\frac{1}{3}$ lines, and $\gtrsim n$ lines with $\gtrsim n^\frac{1}{3}$ points each.
\end{proposition}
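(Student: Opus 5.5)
I will prove both halves by a direct dyadic pigeonholing on the incidence count, using only Theorem \ref{ST} (applied to subconfigurations) and the trivial counting bounds, rather than routing through crossings as in Proposition \ref{ppc}. Write $I=I(P,L)\geq c n^{\frac{4}{3}}$. For a point $p\in P$ let $d(p)$ be the number of lines of $L$ through $p$, and for $\ell \in L$ let $k(\ell)=|\ell\cap P|$, so that $I=\sum_p d(p)=\sum_\ell k(\ell)$. The claim about lines and the claim about points are symmetric (under projective duality, or simply by repeating the argument with the roles exchanged), so I will describe the argument for lines rich in points and then indicate the changes.

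\emph{Step 1: discard poor lines.} The lines with $k(\ell)\leq \eta n^{\frac{1}{3}}$ contribute at most $\eta n^{\frac{4}{3}}$ incidences in total, since there are only $n$ lines. Choosing $\eta=c/4$, at least $\frac{3}{4}I$ of the incidences lie on lines with $k(\ell)>\eta n^{\frac{1}{3}}$.

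\emph{Step 2: discard very rich lines.} This is the step I expect to be the main obstacle, because a priori the incidences could concentrate on few lines. Let $L_M$ be the set of lines with $k(\ell)\geq M n^{\frac{1}{3}}$. Applying Theorem \ref{ST} to $(P,L_M)$ gives $|L_M| M n^{\frac{1}{3}}\leq I(P,L_M)\lesssim n^{\frac{2}{3}}|L_M|^{\frac{2}{3}}+n+|L_M|$. For $M$ large this forces $|L_M|\lesssim n/M^3$, since the alternative $|L_M|Mn^{\frac{1}{3}}\lesssim n$ gives the even stronger bound $|L_M|\lesssim n^{\frac{2}{3}}/M$. Now sum over dyadic shells $2^jMn^{\frac{1}{3}}\leq k(\ell)<2^{j+1}Mn^{\frac{1}{3}}$. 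Applying Theorem \ref{ST} again to the lines of each shell (or using the trivial bound $k(\ell)\leq n$ for the top shells), the incidences in the $j$-th shell are $\lesssim n^{\frac{4}{3}}/(2^jM)^2+n$. There are only $O(\log n)$ shells, so the additive term contributes $O(n\log n)=o(n^{\frac{4}{3}})$, and the main terms sum to $\lesssim n^{\frac{4}{3}}/M^2$. Fixing $M$ large enough in terms of $c$ and the implicit constant of Theorem \ref{ST}, lines with $k(\ell)\geq Mn^{\frac{1}{3}}$ carry at most $\frac{1}{4}I$ incidences.

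\emph{Step 3: count.} After Steps 1 and 2, at least $\frac{1}{2}I\gtrsim n^{\frac{4}{3}}$ incidences lie on lines with $\eta n^{\frac{1}{3}}<k(\ell)<Mn^{\frac{1}{3}}$. Each such line carries fewer than $Mn^{\frac{1}{3}}$ incidences, so there are $\gtrsim n/M\gtrsim n$ such lines, and each has $\gtrsim n^{\frac{1}{3}}$ points. The same three steps with $d(p)$ in place of $k(\ell)$ handle the points. Step 1 is identical, since $|P|=n$. For Step 2, let $P_M$ be the set of points with $d(p)\geq Mn^{\frac{1}{3}}$; Theorem \ref{ST} applied to $(P_M,L)$ bounds $|P_M|\lesssim n/M^3$, and the dyadic summation then goes through unchanged. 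This yields $\gtrsim n$ points lying on $\gtrsim n^{\frac{1}{3}}$ lines each, completing the proposition.
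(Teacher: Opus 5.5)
Your proof is correct, and it takes a genuinely different route from the paper's.

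The paper argues through the crossing-number graph. It uses the average-crossings proposition (almost every edge has $\approx n^{2/3}$ crossings) to claim that $\gtrsim n$ lines each meet $\gtrsim n$ other lines. It then splits each such line's $\approx n$ crossings into $\approx n^{1/3}$ edges carrying $\approx n^{2/3}$ crossings apiece. As written, that argument only produces rich lines. The point half is left implicit (it would need the duality introduced later in Proposition \ref{propellerProp}), and the constants are handled loosely, as the footnote there concedes.

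You instead apply Theorem \ref{ST} as a black box to subconfigurations. A trivial cutoff removes poor lines, the bound $|L_M|\lesssim n/M^3$ controls very rich ones, and pigeonholing on the surviving incidences finishes. Since Theorem \ref{ST} is symmetric in $P$ and $L$, this handles both halves identically. It also makes the constants explicit ($\eta=c/4$, and $M$ depending only on $c$ and the Szemer\'edi--Trotter constant). It even gives the two-sided bound $\eta n^{1/3}<k(\ell)<Mn^{1/3}$.

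What your route does not deliver is the per-edge crossing structure that the paper's proof is really after for Proposition \ref{propellerProp}.

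One small simplification: once you know $|L_M|\lesssim n/M^3$, one more application of Theorem \ref{ST} to $(P,L_M)$ gives $I(P,L_M)\lesssim n^{2/3}|L_M|^{2/3}+n\lesssim n^{4/3}/M^2+n$ directly. So the dyadic shells are not needed.
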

\begin{proof}
We begin by claiming that there are $\gtrsim n$ lines that each cross $\gtrsim n$ other lines. If this were not the case, we would need to have $o(n)$ lines with $\gtrsim n$ crossings. But then by referring to \eqref{crossingBound}, we could reduce the upper bound on the crossing number, and get that there are fewer than $o\left(n^\frac{4}{3}\right)$ incidences. Proposition \ref{avgCrossings} has guaranteed that $\gtrsim n^\frac{4}{3}$ of the edges in our graph have $\approx n^\frac{2}{3}$ crossings. So when we count the $\approx n$ total crossings for any of these $\approx n$ lines, they must break down into $\approx n^\frac{1}{3}$ edges with $\approx n^\frac{2}{3}$ crossings each.
\end{proof}

The fact that the crossings per edge behave in this way gives rise to another fact about the distribution of points and lines.

\begin{proposition}\label{propellerProp}
Any pair of sets of $n$ points and $n$ lines in the plane with $\gtrsim n^\frac{4}{3}$ incidences must consist of at least $\gtrsim n$ points $P'\subseteq P,$ lying on $\gtrsim n^\frac{1}{3}$ lines, with $\approx n^\frac{2}{3}$ points lying between $\gtrsim n^\frac{1}{3}$ pairs of consecutive lines through at least $\gtrsim n$ of the points $p\in P'.$ 
\end{proposition}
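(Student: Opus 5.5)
We mirror the proof of Proposition \ref{typical}, replacing lunes by the double wedges between consecutive lines through a point. The first step is to invoke Proposition \ref{ppl}. It gives a set $L'\subseteq L$ of $\gtrsim n$ lines, each carrying $\gtrsim n^\frac{1}{3}$ points, and a set $P'\subseteq P$ of $\gtrsim n$ points, each lying on $\gtrsim n^\frac{1}{3}$ lines of $L$. Combining this with Proposition \ref{avgCrossings}, all but $o(n^\frac{4}{3})$ of the edges (segments between consecutive points of $P$ on a line of $L$) have $\approx n^\frac{2}{3}$ crossings. Double counting incidences then shows that a positive proportion of the $\approx n^\frac{4}{3}$ edges are incident to points of $P'$. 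Pigeonholing over the points, there are $\gtrsim n$ points $p\in P'$ such that $\gtrsim n^\frac{1}{3}$ of the edges having $p$ as an endpoint carry $\approx n^\frac{2}{3}$ crossings. Here we use that each vertex has degree at most twice the number of lines through it, and we discard the $o(n)$ points of abnormally high degree, exactly as in Proposition \ref{ppc}.

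The second step converts crossings of an edge into points lying in a region, which is the analogue of identifying the lune $L(q,r)$. Take an edge $e=[p,q]$ on a line $\ell\in L$. Any line $\ell'\in L$ crossing the open segment $e$ separates $p$ from $q$. Since lines are unbounded, this alone does not place points of $P$ in a bounded region. So we dualize the roles: we apply the same reasoning to the pencil of lines through $p$. Let $\ell_1,\ell_2$ be consecutive lines of $L$ through $p$. A point $x\in P$ in the open double wedge $W(\ell_1,\ell_2)$ between them lies on lines of $L$. Each such line not passing through $p$ meets $\ell_1$ and $\ell_2$, and so contributes crossings to edges on $\ell_1$ or $\ell_2$ near $p$, namely to the edges emanating from $p$ along $\ell_1,\ell_2$ on the appropriate side. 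Conversely, an edge from $p$ with $\approx n^\frac{2}{3}$ crossings receives those crossings from $\approx n^\frac{2}{3}$ distinct lines. Each of these lines passes through the adjacent wedges, and by Proposition \ref{ppl} most of them carry $\gtrsim n^\frac{1}{3}$ points.

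The third step, which I expect to be the main obstacle, is to turn the statement ``$\approx n^\frac{2}{3}$ lines cross the wedge near $p$'' into ``$\approx n^\frac{2}{3}$ points lie in the wedge''. The two statements are not equivalent: a crossing line may have all its points outside the wedge. To handle this, I would first count incidences inside wedges globally. For each good $p$, the $\approx n^\frac{1}{3}$ wedges at $p$ partition $P\setminus\{p\}$, so the wedge populations sum to $n-1$ and their average is $\approx n^\frac{2}{3}$. Call a wedge at $p$ \emph{rich} if it contains $\geq c\,n^\frac{2}{3}$ points, and \emph{overfull} if it contains $\geq C n^\frac{2}{3}$ points. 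Markov's inequality bounds the number of overfull wedges by $n^\frac{1}{3}/C$. To show that many wedges are rich, I would use a lower bound. If most wedges at $p$ were poor, the lines of $L$ through $p$ would be separated by few points. Then a Sz\'ekely-type subgraph, consisting of the edges on those lines incident to the few points near them, would have $\gtrsim$ its share of edges but $o(n^2)$ crossings in total, contradicting Lemma \ref{crossingNumber} just as in the first half of Proposition \ref{avgCrossings}.

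Finally, we average this over the $\gtrsim n$ good points $p$ and pigeonhole once more. This yields $\gtrsim n$ points of $P'$, each with $\gtrsim n^\frac{1}{3}$ consecutive pairs of lines whose wedge contains $\approx n^\frac{2}{3}$ points, which is the statement of the proposition.
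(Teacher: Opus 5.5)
There is a genuine gap, and it is the one you flagged: nothing in your argument produces the lower bound of $\gtrsim n^{2/3}$ points in $\gtrsim n^{1/3}$ wedges at $\gtrsim n$ points.

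The correspondence in your second step is false. A line through a point $x\in W(\ell_1,\ell_2)$ meets $\ell_1$ and $\ell_2$ wherever it happens to. That is usually on edges far from $p$, or beyond the extreme points, not on the edges emanating from $p$. Conversely, the lines crossing an edge at $p$ need not carry any point of $P$ inside the adjacent wedge. So the primal crossing counts from Propositions \ref{ppl} and \ref{avgCrossings} say nothing about how $P$ is distributed among the wedges at $p$.

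In the third step, Markov correctly bounds the number of overfull wedges. But the populations summing to about $n$ is compatible with a single wedge holding almost everything, so averaging cannot produce rich wedges. Your ``Sz\'ekely-type subgraph'' is not actually specified. Moreover, the poor wedges at a single $p$ involve only about $n^{1/3}$ edges, far too few to contradict Lemma \ref{crossingNumber}. You would need one graph in which, simultaneously for $\gtrsim n$ points $p$, each wedge at $p$ becomes an edge whose crossings are the points in that wedge. The primal graph is not that graph.

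The missing idea, which is how the paper argues, is point--line duality: $p\mapsto \ell(p)=\{x: x\cdot p=1\}$ and $\ell\mapsto p(\ell)$, after translating so that no point is the origin and no line passes through it.
\begin{itemize}
\item Since $p\in\ell$ iff $p(\ell)\in\ell(p)$, the dual pair $(\hat L,\hat P)$ is again ST-sharp.
\item The lines of $L$ through $p$ become the points of $\hat L$ on $\ell(p)$, with consecutive lines corresponding to consecutive points $q,q'$.
\item A point $p_1\in P$ lies strictly inside the double wedge between $\ell(q)$ and $\ell(q')$ swept out by the segment exactly when $p_1\cdot q-1$ and $p_1\cdot q'-1$ have opposite signs. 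That is, exactly when $\ell(p_1)$ crosses the open segment from $q$ to $q'$.
\end{itemize}
Hence the population of that wedge is at least the number of crossings of the edge $qq'$ in the Sz\'ekely drawing of the dual configuration. Applying Propositions \ref{ppl} and \ref{avgCrossings} to $(\hat L,\hat P)$, plus pigeonholing, gives $\gtrsim n$ points $p$, each with $\gtrsim n^{1/3}$ consecutive pairs whose wedge holds $\gtrsim n^{2/3}$ points.

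Your Markov step is then a genuinely useful complement. A dual edge's crossing count only bounds the wedge population from below, so your step is what supplies the upper half of $\approx n^{2/3}$, which the paper's proof does not address.
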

\begin{proof}
To prove this result, we introduce some notation. Given a point $p\in \mathbb R^2\setminus \{(0,0)\},$ define $\ell(p)$ to be the set of points that have dot product $1$ with the point $p.$ It is elementary to show that this will be a line perpendicular to the direction from the origin to $p.$ Given a set of points $P$, consider its dual, written
\[\hat P := \{\ell(p):p\in P\}.\]
Similarly, for a line $\ell$ not through the origin, we denote the unique point that has dot product 1 with all points on $\ell$ by $p(\ell)$. Given a set of lines $L$, we also consider its dual, denoted
\[\hat L := \{p(\ell):\ell\in L\}.\]
Given $p\in P,$ let $L(p),$ the set of lines in $L$ passing through $p.$ Similarly, given a line $\ell\in L,$ let $P(\ell)$ denote the points in $P$ lying on $\ell.$
Now, given an ST-sharp pair, $(P,L)$ with both sets of size $n$, we consider the dual pair, $(\hat L, \hat P).$ By Proposition \ref{ppl}, we have a set $P'$ of $\gtrsim n$ points, each lying on $\gtrsim n^\frac{1}{3}$ lines. Next we define $L'$ to be the set of lines through points in $P'$ with at least $n^\frac{1}{3}$ points from $P,$ namely\footnote{We might need to pay attention to constants here so we aren't overpruning. What I mean is, we are claiming here that there are many points on many lines with many other points, but if we want to modify the definition of $L'$ to have $\ell \cap P',$ there might be some issue with constants collapsing too quickly as we specify subsets.}
\[L':=\{\ell\in L(p): p\in P', |\ell \cap P|\gtrsim n^\frac{1}{3}\}.\]
Fix one of the $\gtrsim n$ points $p\in P'.$ It has $\gtrsim n^\frac{1}{3}$ lines from $L$ running through it, so $|L(p)|\gtrsim n^\frac{1}{3}$. Now consider the set of duals of these lines, $\widehat{L(p)}.$ Each element $q\in \widehat{L(p)}$ is a point that determines a dot product 1 with a line from $L(p).$ Because each of the lines in $L(p)$ goes through $p$, we can see that each of these points $q\in \widehat{L(p)}$ will determine a dot product with $p$, they all lie on $\ell(p)$ by definition.
\begin{center}\label{fig5}
\includegraphics[scale=1]{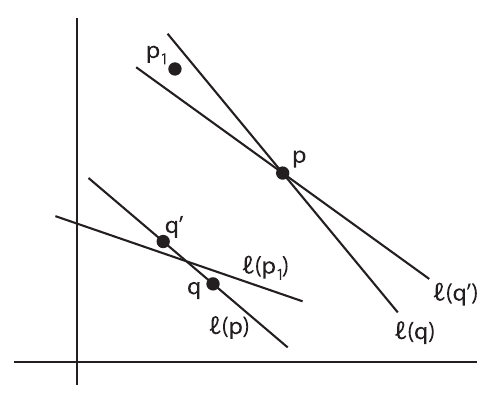}\\
\small{{\bf Figure 5:} The point $p\in P$ is on two lines from $L$. Each of these lines gives rise to a point in $\hat L$, represented by $q$ and $q'$, respectively. Moreover, those points are on the line $\ell(p)\in\hat P.$ Finally, the point $p_1$ is between the lines $\ell(q)$ and $\ell(q')$ because $\ell(p_1)$ crosses $\ell(p)$ between $q$ and $q'.$}
\end{center}
Notice that if $p\in \ell,$ then $p(\ell)\in \ell(p).$ So we have that $I(P,L) = I(\hat L, \hat P),$ and therefore $(\hat L, \hat P)$ is also an ST-sharp pair. So by the reasoning in the proof of Proposition \ref{avgCrossings}, we corresponding graph for $(\hat L, \hat P)$ must have $\approx n$ edges with $\approx n^\frac{2}{3}$ crossings. Therefore there must be $\gtrsim n^\frac{4}{3}$ pairs of points $q,q'\in \hat L$ so that $q$ and $q'$ that are consecutive on $\ell(p)$ for some $p\in P'$ and have $\gtrsim n^\frac{2}{3}$ lines crossing the segment between them. These lines must have the form $\ell(p_j)$ for $\gtrsim n^\frac{2}{3}$ choices of points $p_j\in P.$
To conclude, we fix our attention upon one such pair of $(q,q')$ and one such $p_1.$ Because $\ell(p_1)$ does not contain $q$ or $q',$ but crosses $\ell(p)$ between them, we know that (without loss of generality) $p_1\cdot q <1$ and $p_1\cdot q'>1.$ Therefore, $p_1$ must be strictly between the lines $\ell(q)$ and $\ell(q').$ See Figure 5. If the opposite inequalities hold, then $p_1$ is still between the lines $\ell(q)$ and $\ell(q'),$ albeit on the other side of $p$. See Figure 6.
\begin{center}
\includegraphics[scale=1]{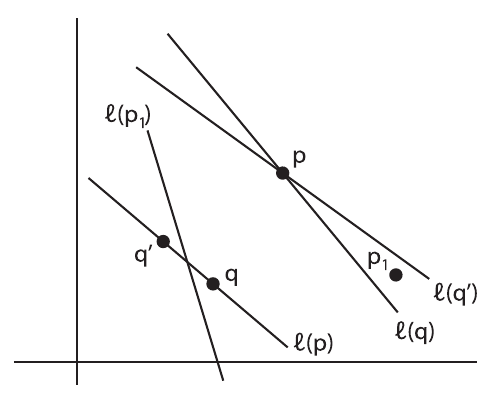}\\
\small{{\bf Figure 6:} This is the same scenario as before, except with $p_1\cdot q' <1$ and $p_1\cdot q>1.$}
\end{center}
By running over all $\gtrsim n^\frac{2}{3}$ points $p_j$ for this pair $(q,q'),$ we see that each $p_j$ must be between the lines $\ell(q)$ and $\ell(q')$. Finally, observing that this holds for $\gtrsim n^\frac{1}{3}$ consecutive pairs $(q,q')$ on $\ell(p)$ for all points $p\in P'$ yields the desired result.
\end{proof}

\end{document}